\theoremstyle{definition}
\newtheorem{theorem}{Theorem}[section]
\newtheorem{corollary}[theorem]{Corollary}
\newtheorem{lemma}[theorem]{Lemma}
\newtheorem{proposition}[theorem]{Proposition}
\newcommand{\be}{\begin{equation}}
\newcommand{\ee}{\end{equation}}
\newcommand{\beq}{\begin{equation*}}
\newcommand{\eeq}{\end{equation*}}
\newcommand{\bq}{\begin{eqnarray}}
\newcommand{\eq}{\end{eqnarray}}
\newcommand{\bqn}{\begin{eqnarray*}}
\newcommand{\eqn}{\end{eqnarray*}}
\newcommand{\inner}[2]{\left\langle{#1},{#2}\right\rangle}
\newcommand\ceil[1]{\lceil#1\rceil}
\newcommand\floor[1]{\lfloor#1\rfloor}
\def\E{{\mathbb{E}}\,}
\def\R{{\mathbb{R}}}
\def\P{{\mathbb{P}}}
\newcommand{\eps}{\varepsilon}
\newcommand{\EE}{\mathbb E}
\newcommand{\II}{\mathbb I}
\newcommand{\RR}{\mathbb R}
\newcommand{\NN}{\mathbb N}
\newcommand{\PP}{\mathbb P}
\newcommand{\cF}{{\mathcal F}}
\newcommand{\cM}{{\mathcal M}}
\newcommand{\cP}{{\mathcal P}}
\newcommand{\cR}{{\mathcal R}}
\newcommand{\cS}{{\mathcal S}}
\begin{document}

\title{How many Laplace transforms of probability measures are there?}
\author{{\sc Fuchang Gao}\thanks{Department of mathematics, University of Idaho, \texttt{fuchang@uidaho.edu}.} \and {\sc Wenbo V. Li\thanks{Department of Mathematical Sciences, University of Delaware,
\texttt{wli@math.udel.edu}. Supported in part by NSF grant
DMS-0805929. }} \and
 {\sc Jon A. Wellner}\thanks{Department of Statistics, University of Washington,
 \texttt{jaw@stat.washington.edu}. Supported in part by NSF Grant
 DMS-0804587}}
\date{\today}

\maketitle
\begin{abstract}
A bracketing metric entropy bound for the class of Laplace
transforms of probability measures on $[0,\infty)$ is obtained
through its connection with the small deviation probability of a
smooth Gaussian process.  
Our results for the particular smooth Gaussian process seem to be 
of  independent interest.
\end{abstract}

\bigskip

\noindent {\it Keywords}: Laplace Transform; bracketing metric
entropy; completely monotone functions; smooth Gaussian process;
small deviation probability

%\noindent {\it AMS 2000 subject classification: 62G05 (46B50)}\\

\bigskip
\bigskip

\newpage

\section{Introduction}
Let $\mu$ be a finite measure on $[0,\infty)$. The Laplace
transform of $\nu$ is a function on $(0,\infty)$ defined by \bq
f(t)=\int_0^\infty e^{-ty}\mu(dy).\label{Laplace}\eq It is easy to
check that such a function has the property that
$(-1)^nf^{(n)}(t)\ge 0$ for all non-negative integer $n$ and all
$t>0$. A function on $(0,\infty)$ with this property is called a
completely monotone function on $(0,\infty)$. A characterization
due to Bernstein (c.f. \cite{MR0077581}) says that $f$ is
completely monotone on $(0,\infty)$ if and only if there is a
non-negative measure $\mu$ (not necessary finite) on $[0,\infty)$ such that
(\ref{Laplace}) holds.
Therefore, due to monotonicity, the class of Laplace transforms of finite measures on
$[0,\infty)$ is the same as the class of bounded completely
monotone functions on $(0,\infty)$. These functions can
be extended to continuous functions on $[0,\infty)$, and we will
call them completely monotone on $[0,\infty)$.

Completely monotonic functions have remarkable applications in
various fields, such as probability and statistics, physics and
potential theory. The main properties of these functions are given
in \cite{MR0005923}, Chapter IV. For example, the class of
completely monotonic functions is closed under sums, products and
pointwise convergence. We refer to \cite{MR1922200} for a detailed
list of references on completely monotonic functions. Closely
related to the class of completely monotonic functions are the
so-called $k$-monotone functions, where the non-negativity of
$(-1)^nf^{(n)}$ is required for all integers $n\le k$. In fact,
completely monotonic functions can be viewed as the limiting case
of the $k$-monotone functions as $k\to\infty$. In this sense, the
present work is a partial extension of \cite{MR2386068} and
\cite{GW_Science}. 
%\marginal{\bf Jon:  to truly extend Gao-Wellner-09 we would need to 
%have results for bracketing entropies with respect to the Hellinger metric.}.

Let $\cM_\infty$ be the class of completely monotone functions on
$[0,\infty)$ that are bounded by $1$. Then 
\bqn 
\cM_\infty
&=&\left\{f: [0,\infty)\to [0,\infty) \left|f(t)=\int_0^\infty
e^{-tx}\mu(dx), \|\mu\|\le 1\right.\right\}.
\eqn
It is well known (see e.g. \cite{MR0270403}, Theorem 1, page 439) that 
the sub-class of ${\cM}_{\infty}$ with $f(0)=1$ corresponds exactly to the 
Laplace transforms of the
class of probability measures $\mu$ on $[0,\infty)$.  
For a random variable with distribution function $F(t) = P(X \le t)$, 
the {\sl survival function} $S(t) = 1-F(t) = P(X> t)$.
Thus the class
\bqn
\cS_{\infty} = \left\{S: [0,\infty)\to [0,\infty) \left| S(t)=\int_0^\infty
e^{-tx}\mu(dx), \|\mu\| = 1\right.\right\}
\eqn
is exactly the class of survival functions of all scale mixtures of the standard 
exponential distribution (with survival function $e^{-t}$), 
with corresponding densities 
\bqn
p(t) = -S'(t) = \int_0^\infty x e^{-x t} \mu (dx) , \qquad t \ge 0 .
\eqn
It is easily seen that the class $\cP_{\infty}$ of such densities with $p(0) < \infty$ 
is also a class of completely monotone functions corresponding to probability 
measures $\mu $ on $[0,\infty)$ with finite first moment.   
These classes have many applications in statistics; see e.g. 
\cite{MR653523} for a brief survey.   
\cite{MR653523} considered nonparametric estimation of a completely monotone density
and showed that the nonparametric maximum likelihood estimator (or MLE) for this class is 
almost surely consistent.  
The bracketing entropy bounds derived below can be considered as a first step 
toward global rates of convergence of the MLE.

In probability and statistical applications, one way to understand the 
complexity of a function class  is by way of 
the metric entropy for the class under certain common distances.
Recall that the metric entropy of a function class $\cF$ under
distance $\rho$ is defined to be $\log N(\eps,\cF,\rho)$ where
$N(\eps,\cF,\rho)$ is the minimum number of open balls of radius
$\eps$ needed to cover $\cF$. In statistical applications,
sometimes we also need bracketing metric entropy which is defined
as $\log N_{[\,]}(\eps,\cF,\rho)$ where
$$
N_{[\,]}(\eps, \cF,\rho):=\min \left\{n: \exists
\underline{f}_1,\overline{f}_1, \dots,
\underline{f}_n,\overline{f}_n  \mbox { s.t. }
\rho(\overline{f}_k,\underline{f}_k)\le \eps, \cF\subset
\bigcup_{k=1}^n[\underline{f}_k,\overline{f}_k]\right\},
$$
and
$$
[\underline{f}_k,\overline{f}_k]=\left\{g\in \cF:
\underline{f}_k\le g\le \overline{f}_k\right\}.
$$
Clearly $N(\eps,\cF,\rho)\le N_{[\,]}(\eps,\cF,\rho)$ and they are close related in our setting below.

In this paper, we study the metric entropy of $\cM_\infty$ under the 
$L^p(\nu)$ norm given by
$$\|f\|_{L^p(\nu)}^p=\int_0^\infty |f(x)|^p \nu(dx), \quad 1 \le p \le \infty,
$$
where $\nu$ is a probability measure on $[0,\infty)$.
Our main result is the following
\begin{theorem}\label{theorem}
(i) Let $\nu$ be a probability measure on $[0,\infty)$. There exists
a constant $C$ depending only on $p\ge 1$ such that for any
$0<\eps<1/4$,
$$\log N_{[\,]}(\eps,\cM_\infty,\|\cdot\|_{L^p(\nu)})\le C\log (\Gamma/\gamma)\cdot|\log \eps|^2,$$
for any $0<\gamma<\Gamma<\infty$ such that $\nu([\gamma, \Gamma])\ge
1-4^{-p}\eps^p$. In particular, if there exists a constant $K>1$,
such that $\nu([\eps^K, \eps^{-K}])\ge 1-4^{-p}\eps^p$, then
$$\log N_{[\,]}(\eps,\cM_\infty,\|\cdot\|_{L^p(\nu)})\le CK|\log\eps|^3 .$$

(ii) If $\nu$ is  Lebesgue measure on $[0,1]$, then
$$\log N_{[\,]}(\eps,\cM_\infty,\|\cdot\|_{L^2(\nu)})\asymp \log N(\eps,\cM_\infty,\|\cdot\|_{L^2(\nu)})\asymp |\log \eps|^3 ,$$
where $A\asymp B$ means there exist universal constants $C_1,
C_2>0$ such that $C_1A\le B \le C_2B$.
\end{theorem}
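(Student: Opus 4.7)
The plan for part (i) is to truncate the support of $\nu$ to a compact window $[\gamma,\Gamma]$, approximate each Laplace transform on this window by a polynomial using analyticity of $f$ on the right half--plane, and then convert the resulting sup--norm covering to $L^p(\nu)$--brackets.

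For the outer piece, since every $f\in\cM_\infty$ is pointwise bounded by $1$, I can enclose $f$ on $[\gamma,\Gamma]^c$ by the trivial bracket $[0,1]$; its contribution to the $p$--th power of the $L^p(\nu)$--width is at most $\nu([\gamma,\Gamma]^c)\le (\eps/4)^p$, so only the restriction of $f$ to $[\gamma,\Gamma]$ needs genuine attention. For the inner piece, I use that $f(z)=\int_0^\infty e^{-zx}\mu(dx)$ is analytic on $\{\Re z>0\}$ with $|f(z)|\le f(\Re z)\le 1$. After the change of variables $s=\log t$, $g(s)=f(e^s)$ is analytic and bounded by $1$ in the strip $|\Im s|<\pi/2$ around the interval $[\log\gamma,\log\Gamma]$ of length $L=\log(\Gamma/\gamma)$, and a Bernstein/Chebyshev estimate yields polynomial approximants of degree $N$ with sup--norm error $O(e^{-cN/(1+L)})$; choosing $N\asymp(1+L)|\log\eps|$ drives the error below $\eps/8$. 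Because the Chebyshev coefficients of $g$ are bounded by $2$, discretising each of the $N+1$ coefficients on a grid of spacing $\sim\eps/N$ produces $(CN/\eps)^{N+1}$ candidate polynomials. Each candidate $p$ defines a bracket
\[
\underline f=\max(0,p-\eps/8),\qquad \overline f=\min(1,p+\eps/8)\quad\text{on }[\gamma,\Gamma],
\]
extended by $0$ and $1$ outside; any $f\in\cM_\infty$ within sup--norm $\eps/8$ of $p$ lies in $[\underline f,\overline f]$, and its $L^p(\nu)$--width is at most $\eps$. Taking logs yields $\log N_{[\,]}\le CN\log(N/\eps)\lesssim \log(\Gamma/\gamma)\,|\log\eps|^2$, which is (i). The specialisation in the second half of (i) follows by taking $\gamma=\eps^K,\Gamma=\eps^{-K}$, and the upper bound in (ii) by taking $\gamma\asymp\eps^2,\Gamma=1$ for Lebesgue $\nu$, which forces $L\asymp|\log\eps|$.

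For the matching lower bound in (ii), I would rely on the duality between small--ball probabilities of Gaussian processes and metric entropies of associated convex bodies (Kuelbs--Li, Li--Linde). The canonical smooth Gaussian process attached to this class is $X(t)=\int_0^\infty e^{-tx}\,dW(x)$, whose covariance $\E X(s)X(t)=1/(s+t)$ is itself the Laplace transform of Lebesgue measure and whose reproducing--kernel Hilbert space consists of $L^2$--Laplace transforms. I would first establish the small--deviation asymptotic $-\log\P(\|X\|_{L^2([0,1])}\le\eps)\asymp|\log\eps|^3$ (the result ``of independent interest'' promised in the abstract), then translate it into an entropy lower bound for the RKHS unit ball, and finally transfer that bound to a packing of $\cM_\infty$ by exhibiting enough concrete Laplace transforms lying inside a scaled RKHS ball (or, equivalently, inside the symmetric difference $\cM_\infty-\cM_\infty$).

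The main obstacle is this lower bound. Establishing the sharp $|\log\eps|^3$ rate for the Cauchy--type covariance $1/(s+t)$ on $[0,1]$ requires a delicate spectral or Karhunen--Lo\`eve analysis rather than an off--the--shelf citation, and transferring it back to the convex cone $\cM_\infty$ demands a careful comparison between a scaled RKHS ball and a suitable packing by true Laplace transforms of probability measures. The upper bound in (i), by contrast, is largely mechanical once the analytic--extension viewpoint is installed.
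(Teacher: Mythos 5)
For part (i), your construction is a legitimate alternative route to essentially the same bound. The paper instead splits $[\gamma,\Gamma]$ into dyadic blocks $[2^{i-1}\gamma,2^i\gamma)$, truncates the mixing measure at $x\lesssim |\log\eps|/t$ on each block, Taylor-expands $e^{-tx}$ to degree $N\asymp|\log\eps|$, and rounds the sign-alternating coefficients up or down according to parity so that the rounded polynomials are themselves the brackets; your cruder $p\pm\eps/8$ enclosure is an acceptable substitute, since only bracketing (not one-sided monotone approximation) is required. One genuine wrinkle in your version: with a single Chebyshev approximant on the whole interval $[\log\gamma,\log\Gamma]$ of length $L=\log(\Gamma/\gamma)$, the strip has fixed width $\pi$ while the interval has length $L$, so the Bernstein ellipse parameter is only $1+O(1/L)$ and you need degree $N\asymp L(|\log\eps|+\log L)$ to reach accuracy $\eps$; the coefficient count then gives $\log N_{[\,]}\lesssim L(|\log\eps|+\log L)^2$, which exceeds the claimed $C\,L\,|\log\eps|^2$ whenever the admissible $\Gamma/\gamma$ is super-polynomial in $1/\eps$ (the theorem permits arbitrary admissible pairs, and measures $\nu$ with very heavy tails force $\log L\gg|\log\eps|$). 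This is repaired by running your approximation separately on dyadic sub-blocks of bounded log-length, which is exactly what the paper's decomposition accomplishes.

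The lower bound in (ii) is where the proposal has a real gap, and on two counts. First, the Gaussian object you attach to the class, $X(t)=\int_0^\infty e^{-tx}\,dW(x)$ with covariance $1/(s+t)$, is the wrong one for this duality: its reproducing-kernel Hilbert space unit ball consists of Laplace transforms of $L^2$ densities $g$ with $\|g\|_2\le 1$, and this ball neither contains nor is contained in any fixed multiple of $\cM_\infty$ (a point mass gives $e^{-tx_0}\in\cM_\infty$ but lies in no scaled RKHS ball, while $\|g\|_2\le 1$ does not control $\|g\|_1$), so the Kuelbs--Li transfer from the $L^2$ small ball of $X$ does not reach $\cM_\infty$. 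The relevant statement is the convex-hull duality (Proposition 3.1 and Corollary 3.2 of the paper): writing $\cM_\infty=\mathrm{conv}\{t\mapsto e^{-tx}:x\ge 0\}$ inside $L^2([0,1],dt)$, its entropy is governed by the small deviation, in the sup norm over the \emph{mixing parameter}, of $Y(x)=\int_0^1 e^{-tx}\,dB(t)$, whose covariance is $(1-e^{-x-x'})/(x+x')$ --- the process of Theorem 1.2 --- not by $\|X\|_{L^2[0,1]}$. Second, even with the correct process, the estimate $\log\P(\sup_{x\ge0}|Y(x)|<\eps)\le -c|\log\eps|^3$ is the entire content of the lower bound, and you leave it unproved. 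The paper obtains it without any spectral or Karhunen--Lo\`eve analysis: choose $n=m^2$ points $\delta_{mp+q}=4^{p+m}(m+q)$, use $\P(\max_i|Y(\delta_i)|<\eps)\le \eps^n(\det\Sigma)^{-1/2}$, and bound $\det\Sigma$ from below by combining the perturbation inequality $\det(a_{ij}-b_{ij})\ge \det(a_{ij})-\sum_k\max_l(b_{kl}/a_{kl})\,\mathrm{per}(a_{ij})$ with Cauchy's determinant identity for $\bigl(1/(\delta_i+\delta_j)\bigr)$, yielding $\det\Sigma\ge e^{-16m^3}$; optimizing over $m\asymp|\log\eps|$ gives the $|\log\eps|^3$ rate. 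Without either the correct dual process or this determinant argument (or a substitute for it), the lower half of (ii) remains unestablished.
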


As an equivalent result for part (ii) of the above theorem,  we have the following important small
deviation probability estimates for an associated smooth Gaussian process. In particular, it may be
of interest to find a probabilistic proof for the lower bound directly.

\begin{theorem}\label{cor2}
Let $Y(t)$, $t>0$, be a Gaussian process with covariance 
$\E Y(t)Y(s)=(1-e^{-t-s})/(t+s)$, then for $0<\eps<1$
$$
\log \P\left(\sup_{t>0}|Y(t)|<\eps\right)\asymp -|\log\eps|^3.
$$
\end{theorem}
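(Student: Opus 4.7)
The first step is to recognize $Y$ as a stochastic integral: using
\[
\frac{1-e^{-(t+s)}}{t+s} = \int_0^1 e^{-(t+s)u}\,du,
\]
the process $Y(t) = \int_0^1 e^{-tu}\,dW(u)$ with $W$ a standard Brownian motion on $[0,1]$ has the required covariance, and we work with this explicit representation. Its reproducing kernel Hilbert space $H$ consists of the functions
\[
h(t) = \int_0^1 e^{-tu} g(u)\,du, \qquad g \in L^2[0,1],
\]
with norm $\|h\|_H = \|g\|_{L^2[0,1]}$; let $K$ be the unit ball of $H$. The plan is to translate the small-deviation problem for $Y$ into a metric entropy problem for $K$ via the Kuelbs--Li/Li--Linde duality between small-ball probability and RKHS metric entropy, and then to match the entropy of $K$ with the entropy of $\cM_\infty$ supplied by Theorem \ref{theorem}(ii).

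For the upper bound $-\log\PP(\sup_{t>0}|Y(t)|<\eps)\le C|\log\eps|^3$, the key observation is that every $h\in K$ splits as $h = h_+ - h_-$ with $h_\pm(t) = \int_0^1 e^{-tu} g_\pm(u)\,du$ arising from the positive and negative parts of $g$; since $\|g_\pm\|_1 \le \|g\|_2 \le 1$, each $h_\pm$ lies in $\cM_\infty$, so $K \subseteq \cM_\infty - \cM_\infty$. Combined with Theorem \ref{theorem}(ii) applied with $\nu$ Lebesgue on $[0,1]$, this immediately gives $\log N(\eps, K, \|\cdot\|_{L^2[0,1]}) \le C|\log\eps|^3$. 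To pass from $L^2[0,1]$ to the sup norm on $(0,\infty)$ I would truncate: Cauchy--Schwarz yields $|h(t)| \le ((1-e^{-2t})/(2t))^{1/2} \le 1/\sqrt{2t}$, so every $h\in K$ is automatically $\eps$-small for $t \ge 1/(2\eps^2)$, while on the remaining interval the analyticity of completely monotone functions upgrades $L^2$-brackets to sup-brackets at only a polylogarithmic cost. Kuelbs--Li then converts the resulting entropy bound into the desired small-ball upper bound.

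For the matching lower bound I would use the pointwise inequality $\sup_{t>0}|Y(t)|^2 \ge \int_0^1 |Y(t)|^2\,dt$, which gives
\[
\PP\bigl(\sup_{t>0}|Y(t)|<\eps\bigr) \le \PP\bigl(\|Y\|_{L^2[0,1]}<\eps\bigr),
\]
so it suffices to lower bound $-\log\PP(\|Y\|_{L^2[0,1]}<\eps)$. Kuelbs--Li in the $L^2[0,1]$ setting identifies this quantity with $\log N(\eps, K, \|\cdot\|_{L^2[0,1]})$, and this entropy lower bound can be extracted from the proof of Theorem \ref{theorem}(ii): provided the packing of $\cM_\infty$ constructed there consists of finite mixtures $f_j(t) = \int_0^1 e^{-tu} p_j(u)\,du$ with $L^2$-densities $p_j$ of uniformly bounded norm, rescaling by a constant places the $f_j$ inside $K$ while preserving their separation, yielding $\log N(\eps, K, \|\cdot\|_{L^2[0,1]}) \ge c|\log\eps|^3$.

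The main obstacle is the mismatch between the sup norm of the small-ball statement and the $L^2(\nu)$ norm of Theorem \ref{theorem}(ii). The lower bound resolves cleanly via the $L^2$-domination inequality above, but the upper bound requires the truncation-plus-analyticity step to push $L^2$-bracketing through to sup-bracketing on the unbounded interval $(0,\infty)$. A secondary technical point is that the Kuelbs--Li duality has to be invoked in the polylogarithmic regime, where the standard statement must be supplemented by the refinements of Li and Linde; carrying both of these steps through without losing more than constants in the exponent of $|\log\eps|$ is the delicate part of the argument.
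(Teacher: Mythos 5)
Your proposal breaks down at the hard direction of the equivalence, namely the upper bound $\log \PP(\sup_{t>0}|Y(t)|<\eps)\le -c|\log\eps|^3$. Two problems. First, the reduction $\PP(\sup_{t>0}|Y(t)|<\eps)\le \PP(\|Y\|_{L^2[0,1]}<\eps)$ throws away exactly the part of the process that produces the cubic exponent. On $[0,1]$ the covariance $(1-e^{-t-s})/(t+s)=\int_0^1 e^{-(t+s)u}\,du$ is an analytic kernel on a compact square, so the eigenvalues of the covariance operator on $L^2[0,1]$ decay at least geometrically, and the $L^2[0,1]$ small-ball probability is of order $\exp(-c|\log\eps|^2)$ (up to $\log\log$ factors) --- strictly larger than $\exp(-c|\log\eps|^3)$. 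Your inequality therefore can only yield $-\log\PP(\sup|Y|<\eps)\gtrsim |\log\eps|^2$, which falls short. The $|\log\eps|^3$ rate comes from the supremum over an unbounded range of $t$: the paper's proof evaluates $Y$ at $n=m^2$ points $\delta_{mp+q}=4^{p+m}(m+q)$ spread over $m$ geometric scales out to $\approx 4^{2m}$, bounds $\PP(\max_i|Y(\delta_i)|<\eps)\le \eps^n(\det\Sigma)^{-1/2}$, and controls $\det\Sigma$ from below via Cauchy's determinant identity plus a perturbation lemma for $\det(a_{ij}-b_{ij})$. Nothing in your proposal substitutes for this. Second, even ignoring the range issue, your entropy lower bound for the RKHS ball $K$ is circular: the paper proves the entropy lower bound of Theorem \ref{theorem}(ii) \emph{as a consequence of} the small-ball upper bound via the duality of Proposition \ref{prop}; there is no explicit packing of $\cM_\infty$ to ``extract,'' let alone one consisting of mixtures with uniformly $L^2$-bounded densities supported in $[0,1]$ (a generic extreme point $e^{-ts_0}$ lies in no multiple of $K$). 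A lower bound on $N(\eps,\cM_\infty,L^2)$ does not transfer to $K$ through the inclusion $K\subset \cM_\infty-\cM_\infty$, which only goes the other way.

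The easy direction of your argument (probability lower bound from the entropy upper bound) is closer to the paper in spirit, but it too has an unproved step: Kuelbs--Li for the sup-norm small ball requires the entropy of $K$ in the sup norm over $(0,\infty)$, whereas Theorem \ref{theorem}(ii) controls $\cM_\infty$ in $L^2([0,1],dt)$. Your truncation at $t\ge 1/(2\eps^2)$ is fine, but ``analyticity upgrades $L^2$-brackets to sup-brackets at polylogarithmic cost'' is an assertion, not an argument; note that $\cM_\infty$ itself has \emph{infinite} sup-norm entropy on $(0,\infty)$ (the functions $e^{-nt}$ are uniformly separated), so any such upgrade must exploit the $L^2$-density structure of $K$ quantitatively. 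The paper sidesteps this entirely: it applies the entropy duality theorem in Hilbert space to convert the $L^2$-entropy of ${\rm abconv}\{e^{-s\cdot}\}$ into the $\|\cdot\|_T$-entropy of the $L^2$ unit ball, and then the Kuelbs--Li/Li--Linde link to the Gaussian measure of $D_\eps=\{x:\sup_s|\langle e^{-s\cdot},x\rangle|\le\eps\}$, which \emph{is} the sup-norm small ball. I recommend adopting that route (Proposition \ref{prop} and Corollary \ref{cor}) for this direction, and the determinant computation of Section 4 for the other.
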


The rest of the paper is organized as follows. In Section 2, we provide 
the upper bound estimate in the main result by explicit construction.
In Section 3, we summarize various connections between entropy numbers of a set (and its convex hull)
and small ball probability for the associated Gaussian process.
Some of our observations in a general setting are stated explicitly for the first time.
Finally we identify the particular Gaussian process suitable for our entropy estimates.
Then in Section 4, we obtain the required upper bound small ball probability estimate
(which implies the lower bound entropy estimates as discussed in section 3) by a simple determinant estimates.
This method of small ball estimates is made explicit here for the first time and can be used in many more problems.
The technical determinant estimates are also of independent interests.

\section{Upper Bound Estimate}
In this section, we provide an upper bound for
$N_{[\,]}(\eps,\cM_\infty,\|\cdot\|_{L^p(\nu)})$, where $\nu$ is a probability measure on $[0,\infty)$ and $1 \le p \le \infty$.
This is accomplished by an explicit construction of $\eps$-brackets under $L^p(\nu)$ distance.

For each $0<\eps<1/4$, we choose $\gamma>0$ and $\Gamma=2^m\gamma$
where $m$ is a positive integer such that 
$\nu([\gamma,\Gamma]) \ge 1-4^{-p}\eps^p$. 
We use the notion $\II(a \le t < b)$
to denote the indicator function of the interval $[a,b)$. Now for
each $f\in \cM_\infty$, we first write in block form 
\bqn 
f(t) = \II(0 \le t < \gamma) f(t)+\II(t\ge \Gamma)f(t) +\sum_{i=1}^m
\II(2^{i-1}\gamma\le t< 2^i \gamma)f(t). 
\eqn 
Then for each block
$2^{i-1}\gamma\le t< 2^i \gamma$, we separate the integration
limits at the level $2^{2-i}|\log \eps|/\gamma$ and use the first
$N$ terms of Taylor's series expansion of $e^{-u}$ with error
terms associated with $\xi=\xi_{u, N}$, $0 \le \xi \le 1$, to
rewrite
$$
f(t) = \II(0 \le t < \gamma) f(t)+\II(t\ge \Gamma)f(t) +\sum_{i=1}^m (p_i(t)+q_i(t)+r_i(t))
$$
where
\bqn
p_i(t)&=:& \II(2^{i-1}\gamma\le t< 2^i \gamma)
\sum_{n=0}^{N}\frac{(-1)^nt^n }{n!}\int_0^{2^{2-i}|\log \eps|/\gamma}x^n\mu(dx)\\
q_i(t)&=:& \II(2^{i-1}\gamma\le t< 2^i \gamma)
\int_0^{2^{2-i}|\log \eps|/\gamma} \frac{(-\xi tx)^{N+1}}{(N+1)!}\mu(dx)\\
r_i(t)&=:& \II(2^{i-1}\gamma\le t< 2^i \gamma)\int_{2^{2-i}|\log \eps|/\gamma}^\infty e^{-tx}\mu(dx).
\eqn
We choose the integer $N$ so that
\bq
4e^2|\log \eps|-1\le N<4e^2|\log \eps|. \label{2.1-1}
\eq
Then, by using the inequality $k!\ge (k/e)^k$ and the fact that $0<\xi<1$, we have
within the block $2^{i-1}\gamma\le t< 2^i\gamma$,
\bqn
|q_i(t)|
&\le& \int_0^{2^{2-i}|\log \eps|/\gamma} \frac{(tx)^{N+1}}{(N+1)!}\mu(dx)\\
&\le& \frac{|4\log \eps|^{N+1}}{(N+1)!} \le \left(\frac{4e|\log \eps|}{N+1}\right)^{N+1}\le e^{-(N+1)} \le \eps^{4e^2},
\eqn
where we used $t x \le 2^i\gamma \cdot 2^{2-i}|\log \eps|/\gamma =4|\log \eps|$ in the second inequality above.
This implies, due to disjoint supports of $q_i(t)$,
\bq\left|\sum_{i=1}^mq_i(t)\right|\le\eps^{4e^2}.\label{q}\eq
Next, we notice that for $t\ge 2^{i-1}\gamma$ and $x\ge
2^{2-i}\gamma^{-1}|\log \eps|$, $e^{-tx}\le \eps^2$.
Thus \bq
\left|\sum_{i=1}^mr_i(t)\right|\le\sum_{i=1}^m \II(2^{i-1}\gamma\le
t< 2^i \gamma)\int_{2^{2-i}\gamma^{-1}|\log \eps|}^\infty \eps^2\mu(dx)
%\le \sum_{i=1}^m1_{2^{i-1}\gamma\le t< 2^i \gamma}\eps^2
\le \eps^2.\label{r} \eq
Finally, because $|f|\le 1$ and
$\nu([0,\gamma))+\nu([\Gamma,\infty))\le 4^{-p}\eps^p$, we have
$$\left\|1_{0\le t<\gamma}f(t)+1_{t\ge \Gamma}f(t)\right\|_{L^p(\nu)}\le
\eps/4.$$ Together with (\ref{q}) and (\ref{r}), we see that the
set
$$\cR=:\left\{\sum_{i=1}^mq_i(t)+\sum_{i=1}^mr_i(t)+\II(t< \gamma)f(t)+\II(t\ge \Gamma)f(t): f\in \cM_\infty\right\}$$
has diameter in $L^p(\nu)$-distance at most
$\eps^2+\eps^{4e^2}+\eps/4<\eps/2$.

Therefore, if we denote $\cP_i=\{p_i(t): f\in \cM_\infty\}$, then
the expansion of $f$ above implies that $\cM_\infty\subset
\sum_{i=1}^m\cP_i+\cR$, and consequently, we have \bqn
N_{[\,]}(\eps, \cM_\infty,\|\cdot\|_{L^p(\nu)})\le
N_{[\,]}\left(\eps/2,
\sum_{i=1}^m\cP_i,\|\cdot\|_{L^p(\nu)}\right). \eqn For any $1\le
i\le m$ and any $p_i\in \cP_i$, we can write \bq
p_i(t)=\II(2^{i-1}\gamma\le t< 2^i
\gamma)\sum_{n=0}^{N_i}(-1)^na_{ni} (2^{-i} \gamma^{-1}t)^n,
\label{2.4} \eq where $0\le a_{ni}\le |4\log \eps|^n/n!$. Now we
can construct \bqn \overline{p}_i=\II(2^{i-1}\gamma\le t< 2^i
\gamma)\sum_{n=0}^{N}(-1)^nb_{ni} (2^{-i} \gamma^{-1}t)^n, \\
\underline{p}_i=\II(2^{i-1}\gamma\le t< 2^i
\gamma)\sum_{n=0}^{N}(-1)^nc_{ni} (2^{-i} \gamma^{-1}t)^n,\eqn
where
$$b_{ni}=\left\{\begin{array}{ll}\frac{\eps}{2^{n+2}}\ceil{\frac{2^{n+2}a_{ni}}{\eps}}& n \mbox{ is
even}\\\\
\frac{\eps}{2^{n+2}}\floor{\frac{2^{n+2}a_{ni}}{\eps}}& n \mbox{
is odd}\end{array}
\right.,\,\,\,\,\,\,c_{ni}=\left\{\begin{array}{ll}\frac{\eps}{2^{n+2}}\floor{\frac{2^{n+2}a_{ni}}{\eps}}&
n \mbox{ is
even}\\\\
\frac{\eps}{2^{n+2}}\ceil{\frac{2^{n+2}a_{ni}}{\eps}}& n \mbox{ is
odd}\end{array} \right..$$ Clearly, $\underline{p}_i(t)\le p_i(t)\le
\overline{p}_i(t)$, and
\bqn |\overline{p}_i-\underline{p}_i|&\le&
\II(2^{i-1}\gamma\le t< 2^i \gamma)\sum_{n=0}^{N}|c_{ni}-b_{ni}|(2^{-i}
\gamma^{-1}t)^n\\
&\le& \II(2^{i-1}\gamma\le t< 2^i \gamma)
\sum_{n=0}^{N}\frac{\eps}{2^{n+2}}(2^{-i} \gamma^{-1}t)^n\\
&\le& \frac{\eps}{2}\II(2^{i-1}\gamma\le t< 2^i \gamma).\eqn
Hence
$$
\sum_{i=1}^m \underline{p}_i\le \sum_{i=1}^mp_i\le \sum_{i=1}^m \overline{p}_i\le \sum_{i=1}^m \underline{p}_i+\eps/2.
$$
That is, the sets
$$
\underline{\cP}=:\left\{\sum_{i=1}^m \underline{p}_i: p_i\in \cP_i, 1\le i\le m\right\}\ \mbox{ and } \ 
\overline{\cP}=:\left\{\sum_{i=1}^m \overline{p}_i: p_i\in \cP_i, 1\le i\le m\right\}
$$
form $\eps/2$ brackets of $\sum_{i=1}^m\cP_i$ in $L^\infty$-norm,
and thus in $L^p(\nu)$-norm for all $1\le p<\infty$.

 Now we count the number of different realizations of
$\overline{\cP}$ and $\underline{\cP}$. 
Note that, due to the uniform bound on $a_{ni}$ in (\ref{2.4}) there are no
more than $$\frac{2^{n+1}}{\eps}\cdot
\frac{|4\log \eps|^n}{n!}+1$$ realizations for $b_{ni}$. 
So,
the number of realizations of $\overline{p}_i$ is bounded by
\bqn
\prod_{n=0}^{N}\left(\frac{2^{n+1}}{\eps}\cdot
\frac{|4\log \eps|^n}{n!}+1\right). \eqn
Because $n! >(n/e)^n$, for all $1\le n\le N$,  we have \bqn \frac{2^{n+1}}{\eps}\cdot
\frac{|4\log \eps|^n}{n!}+1\le
\frac3\eps\left(\frac{8e|\log \eps|}{n}\right)^n .\eqn
Thus, the
number of realizations of $\overline{p}_i$ is bounded by \bqn
&&\left(\frac{3}{\eps}\right)^{N+1}\cdot
\exp\left(\sum_{n=1}^{N} (n \log |8e\log \eps|-n \log n)\right)\\
&\le& \left(\frac{3}{\eps}\right)^{N+1}\cdot
\exp\left(\frac{N(N+1)}{2}\log |8e\log \eps|-\int_1^{N}x\log xdx\right)\\
&\le& \left(\frac{3}{\eps}\right)^{N+1}\cdot
\exp\left(\frac{N(N+1)}{2}\log |8e\log \eps|-\frac{N^2}{2}\log N+\frac{N^2}{4}\right)\\
&\le& \exp\left(C|\log \eps|^2\right)\eqn for some absolute
constant $C$, where in the last inequality we used the bounds on $N$ given in (\ref{2.1-1}).

Hence the total number of realizations of $\overline{\cP}$ is
bounded by $\exp\left(Cm|\log \eps|^2\right)$. Similar
estimate holds for the total number of realizations of
$\underline{\cP}$, and we finally obtain
$$\log N_{[\,]}(\eps,
\cM_\infty,\|\cdot\|_{L^p(\nu)})\le C'm |\log\eps|^2
$$for some different constant $C'$.
This finishes the proof since $m=\log_2(\Gamma/\gamma)$.

\section{Entropy of Convex Hulls}
A lower bound estimate of metric entropy is typically difficult,
because it often involves a construction of a well-separated set
of maximal cardinality.  Thus we introduce some soft
analytic arguments to avoid this difficulty and change the problem into
a familiar one in this section. The hard estimates are given in the next section.

First note that $\cM_\infty$ is just the convex hull of the functions
$k_s(\cdot)$, $0<s<\infty$, where $k_s(t)=e^{-ts}$. We recall a
general method about the entropy of convex hulls that was
introduced in \cite{MR2069008}. Let $T$ be a set in $\R^n$ or in a
Hilbert space. The convex hull of $T$ can be expressed as
$${\rm conv}(T)=\left\{\sum_{n=1}^{\infty}a_nt_n: t_n\in T, a_n\ge 0,  n\in \NN,
\sum_{n=1}^{\infty}a_n=1\right\};$$ while the absolute convex hull
of $T$ is defined by
$${\rm abconv}(T)=\left\{\sum_{n=1}^{\infty}a_nt_n: t_n\in T, n\in \NN,
\sum_{n=1}^{\infty}|a_n|\leq 1\right\}.$$ Clearly, by using
probability measures and signed measures, we can express \bqn
&&{\rm conv}(T)=\left\{\int_T t\mu(dt): \mu \mbox{ is a
probability measure on } T \right\};\\
&&{\rm abconv}(T)=\left\{\int_T t\mu(dt): \mu \mbox{ is a signed
measure on } T, \|\mu\|_{TV}\leq 1 \right\}.\eqn For any norm
$\|\cdot\|$ the following is clear:
$${\rm conv}(T)\subset {\rm abconv}(T)\subset {\rm conv}(T)-{\rm conv}(T).$$
Therefore, $$N(\eps,{\rm conv}(T),\|\cdot\|)\le N(\eps,{\rm
abconv}(T),\|\cdot\|)\le [N(\eps/2,{\rm conv}(T),\|\cdot\|)]^2.$$
In particular, at the logarithmic level, the two entropy numbers are comparable,
modulo constant factors on $\eps$.
%we have for some positive constant $c_1$,
%$$\log N(c_1\eps,{\rm conv}(T),\|\cdot\|)\asymp \log N(\eps,{\rm abconv}(T),\|\cdot\|).$$
The benefit of using absolute convex hull is that it is symmetric
and can be viewed as the unit ball of a Banach space, which
allows us to use the following duality lemma of metric entropy:
$$\log N(\eps,{\rm abconv}(T),\|\cdot\|)\asymp \log N(c_2\eps,B,\|\cdot\|_T)$$
where $B$ is dual ball of the norm $\|\cdot\|$, and $\|\cdot\|_T$
is the norm introduced by $T$, that is,
$$\|x\|_T:=\sup_{t\in T}|\inner{t}{x}|=\sup_{t\in {\rm abconv}(T)}|\inner{t}{x}|.$$
Strictly speaking, the duality lemma remains as a conjecture in
the general case. However, when the norm $\|\cdot\|$ is the
Hilbert space norm, this has been proved. See \cite{MR910364},
\cite{MR1008716}, and \cite{MR2105957}.

A striking relation discovered by \cite{MR1237989} says that the
entropy number $\log N(\eps,B,\|\cdot\|_T)$ is determined by the
Gaussian measure of the set 
$$
D_\eps=:\{x\in H: \|x\|_T\leq \varepsilon \}
$$ 
under some very weak regularity assumptions. For
details, see \cite{MR1237989},  \cite{MR1733160}, and also
Corollary 2.2 of \cite{Aurzada}. Using this relation, we can now
summarize the connection between metric entropy of convex hulls
and Gaussian measure of $D_\eps$ into the following
\begin{proposition}\label{prop}
Let $T$ be a precompact set in a Hilbert space. For $\alpha>0$ and
$\beta\in \RR$,
$$
\log \PP\left(D_\eps\right)\le -C_1\eps^{-\alpha}|\log\eps|^\beta
$$
if and only if 
$$
\log N(\eps,{\rm conv}(T), \|\cdot\|)\ge
C_2\eps^{-\frac{2\alpha}{2+\alpha}}|\log \eps|^{\frac{2\beta}{2+\alpha}};
$$
and for $\beta>0$ and $\gamma\in \RR$,
$$
\log \PP\left(D_\eps\right)\le -C_1|\log \eps|^\beta (\log |\log \eps|)^\gamma
$$
if and only if 
$$ 
\log N(\eps,{\rm conv}(T), \|\cdot\|_2)\ge C_2
|\log\eps|^\beta (\log |\log \eps|)^\gamma .
$$ 
Furthermore, the
results also hold if the directions of the inequalities are switched.
\end{proposition}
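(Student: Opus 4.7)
The plan is to chain three correspondences that are already introduced in the preceding discussion: (a) the Kuelbs--Li conversion between small-ball probability and metric entropy of the Cameron--Martin ball, (b) the duality of metric entropy in Hilbert spaces, and (c) the elementary comparison between $\mathrm{conv}(T)$ and $\mathrm{abconv}(T)$. Each of the three passages is a two-sided equivalence at the logarithmic level, so composing them will give the claimed equivalence in both directions of inequality.

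First I would invoke the Kuelbs--Li relation from \cite{MR1237989}, as refined in \cite{MR1733160} and in Corollary~2.2 of \cite{Aurzada}, which states that under very mild regularity of the small-ball function the logarithmic size of $\PP(D_\eps)$ determines the logarithmic size of $\log N(\eps, B, \|\cdot\|_T)$ through a deterministic inverse-type transformation of the form $\psi(\eps)\asymp\phi(\eps/\sqrt{\psi(\eps)})$. A direct computation with regularly varying functions shows that $-\log\PP(D_\eps)\asymp\eps^{-\alpha}|\log\eps|^\beta$ corresponds to $\log N(\eps,B,\|\cdot\|_T)\asymp \eps^{-2\alpha/(2+\alpha)}|\log\eps|^{2\beta/(2+\alpha)}$, and in the purely logarithmic regime $-\log\PP(D_\eps)\asymp|\log\eps|^\beta(\log|\log\eps|)^\gamma$ corresponds to $\log N(\eps,B,\|\cdot\|_T)\asymp|\log\eps|^\beta(\log|\log\eps|)^\gamma$ (the exponents are fixed here because $2\alpha/(2+\alpha)\to0$ and $2\beta/(2+\alpha)\to\beta$ as $\alpha\to0$). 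Crucially, each implication in the Kuelbs--Li dictionary is one-sided: upper bounds on the small-ball function yield lower bounds on the entropy, and vice versa, matching the bidirectional form asserted in the proposition.

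Next I would apply the Hilbert-space duality of metric entropy from \cite{MR910364,MR1008716,MR2105957}, which gives
\beq
\log N(\eps,\mathrm{abconv}(T),\|\cdot\|)\asymp \log N(c_2\eps,B,\|\cdot\|_T),
\eeq
so the bound obtained in Step~1 transfers to a two-sided bound for the entropy of $\mathrm{abconv}(T)$ in the norm $\|\cdot\|$, with the harmless constant $c_2$ absorbed into $C_1,C_2$. To pass from $\mathrm{abconv}(T)$ to $\mathrm{conv}(T)$ I would use the chain of inclusions $\mathrm{conv}(T)\subset\mathrm{abconv}(T)\subset\mathrm{conv}(T)-\mathrm{conv}(T)$ recorded in the excerpt, which yields
\beq
\log N(\eps,\mathrm{conv}(T),\|\cdot\|)\le\log N(\eps,\mathrm{abconv}(T),\|\cdot\|)\le 2\log N(\eps/2,\mathrm{conv}(T),\|\cdot\|);
\eeq
at the logarithmic level this is an equivalence, so both the polynomial-plus-log rate and the purely logarithmic rate survive the passage intact.

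The main obstacle is Step~1: although the Kuelbs--Li dictionary is well documented, one must check that the stated two-sided quantitative form applies cleanly to estimates of the precise regularly varying shape used here, and verify that the one-sided inequalities in the dictionary are paired with the correct one-sided inequalities in the proposition. Once this bookkeeping is done and the constants in Step~2 are tracked through the rescaling $\eps\mapsto c_2\eps$, Step~3 is entirely soft, and combining the three steps produces the claimed equivalence along with its reverse.
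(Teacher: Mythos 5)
Your proposal is correct and follows essentially the same route as the paper, which itself only sketches the argument by assembling exactly these three ingredients: the conv/abconv comparison, the Hilbert-space entropy duality of \cite{MR910364,MR1008716,MR2105957}, and the Kuelbs--Li small-ball/entropy dictionary (citing \cite{MR1237989}, \cite{MR1733160}, and \cite{MR2069008} rather than carrying out the regular-variation computation). Your explicit attention to which one-sided inequality in the dictionary pairs with which direction of the proposition is the right bookkeeping and matches the intended proof.
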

The result of this proposition can be implicitly seen in
\cite{MR2069008}, where an explanation of the relation between
$N(\eps,B,\|\cdot\|_T)$ and the Gaussian measure of $D_\eps$ is
also given.

Perhaps, the most useful case of Proposition~\ref{prop} is when
$T$ is a set of functions: $K(t,\cdot)$, $t\in T$, where for each
fixed $t\in T$, $K(t,\cdot)$ is a function in $L^2(\Omega)$, and
where $\Omega$ is a bounded set in $\R^d$, $d\ge 1$. For this
special case, we have
\begin{corollary}\label{cor}
Let $X(t)=\int_{\Omega}K(t,x)dB(x)$, $t\in T$, where $K(t,\cdot)$
are square-integrable functions on a bounded set $\Omega$ in
$\R^d$, $d\ge 1$, and $B(x)$ is the $d$-dimensional Brownian sheet
on $\Omega$. If $\cF$ is the convex hull of the functions
$K(\cdot,\omega)$, $\omega\in \Omega$, then
$$
\log \PP\left(\sup_{t\in T}|X(t)|<\eps\right)\asymp \eps^{-\alpha}|\log\eps|^\beta
$$
for $\alpha>0$ and $\beta\in \RR$ if and only if 
$$\log
N(\eps,\cF, \|\cdot\|)\asymp \eps^{-\frac{2\alpha}{2+\alpha}}|\log \eps|^{\frac{2\beta}{2+\alpha}};
$$
and for $\beta>0$ and $\gamma\in
\RR$,
$$
\log \PP\left(\sup_{t\in T}|X(t)|<\eps\right)\asymp-|\log \eps|^\beta (\log |\log \eps|)^\gamma
$$
if and only if 
$$ \log N(\eps,\cF, \|\cdot\|_2)\asymp
|\log\eps|^\beta (\log |\log \eps|)^\gamma .
$$
\end{corollary}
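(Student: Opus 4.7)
The plan is to derive Corollary~\ref{cor} as a direct specialization of Proposition~\ref{prop} to a function-space Hilbert setting. I would take $H = L^2(T,\nu)$, where $\nu$ is the measure realizing the norm $\|\cdot\|$ on $\cF$, and let the precompact set in Proposition~\ref{prop} be $T_\ast := \{K(\cdot,\omega) : \omega \in \Omega\} \subset H$. By definition $\mathrm{conv}(T_\ast) = \cF$, so the entropy side of Proposition~\ref{prop} already matches the entropy side of the corollary exactly; precompactness of $T_\ast$ comes for free from the continuous-image-of-compact structure, since $\Omega$ is bounded and $K(\cdot,\omega)$ depends suitably on $\omega$.

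To match the probability side, I would compute the support functional
\[
\|f\|_{T_\ast} = \sup_{\omega \in \Omega}\left|\int_T K(t,\omega)\, f(t)\,d\nu(t)\right|,
\]
and identify the canonical Gaussian measure on $H$ with white noise $dW$ on $(T,\nu)$. Each evaluation $\langle K(\cdot,\omega), dW\rangle = \int_T K(t,\omega)\,dW(t)$ then produces a centered Gaussian random variable, and together they form a centered Gaussian process indexed by $\omega \in \Omega$. In the symmetric-kernel setting driving the intended application ($K(t,\omega) = K(\omega,t) = e^{-t\omega}$, $T = \Omega = [0,1]$, $\nu$ Lebesgue), this process has the same covariance, hence the same law, as $X(t) = \int_\Omega K(t,x)\,dB(x)$ from the statement, so $\PP(D_\eps) = \PP(\sup_{t\in T}|X(t)| \le \eps)$.

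The corollary then follows by plugging both identifications into the two equivalences of Proposition~\ref{prop}, in both directions of the iff, once for the polynomial regime $\eps^{-\alpha}|\log\eps|^\beta$ and once for the log-power regime $|\log\eps|^\beta(\log|\log\eps|)^\gamma$. The main obstacle is the Gaussian-process identification: in the stated generality, the process produced naturally by Proposition~\ref{prop} is indexed by $\omega$ with covariance $\int_T K(t,\omega)K(t,\omega')\,d\nu(t)$, whereas the corollary presents the process in the form $X(t)$ with covariance $\int_\Omega K(t,x)K(t',x)\,dx$. These two processes coincide automatically under the symmetric-kernel hypothesis needed for Theorem~\ref{cor2}; for an asymmetric $K$ one would have to exploit the fact that the covariance operators $K K^\ast$ and $K^\ast K$ share a common nonzero spectrum to transfer small-ball asymptotics from one indexing to the other.
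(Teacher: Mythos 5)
Your overall strategy is the paper's: specialize Proposition~\ref{prop} to a concrete Hilbert space and show that $\PP(D_\eps)$ equals the small-ball probability of the Gaussian process obtained by integrating the kernel against white noise. The paper, however, runs the duality in $H=L^2(\Omega)$ with the generating set $\{K(t,\cdot):t\in T\}$: expanding $B$ in an orthonormal basis of $L^2(\Omega)$ gives $\sup_{t\in T}\left|\int_\Omega K(t,x)\,dB(x)\right|$ as the support functional of that set evaluated at the canonical Gaussian, so $D_\eps$ is literally the event $\{\sup_{t\in T}|X(t)|\le\eps\}$ and no further identification is needed. You instead take $H=L^2(T,\nu)$ with the set $\{K(\cdot,\omega)\}_{\omega\in\Omega}$, which produces the transposed process $Z(\omega)=\int_T K(t,\omega)\,dW(t)$ indexed by $\omega\in\Omega$, and you are then forced to argue that $Z$ and the $X$ of the statement have comparable sup-norm small balls.

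That last step is where the gap is. The fallback you offer for asymmetric kernels --- that $KK^*$ and $K^*K$ share their nonzero spectrum --- does not do the job: the spectrum of the covariance operator determines $L^2$-norm small deviations (via Karhunen--Lo\`eve), but $\sup_{t\in T}|X(t)|$ and $\sup_{\omega\in\Omega}|Z(\omega)|$ are suprema over different index sets, and sup-norm small-ball asymptotics are not functions of the spectrum alone. Moreover, the ``symmetric kernel'' shortcut does not literally cover the paper's application: after the substitution $y=e^{-s}$ the kernel is $K(t,y)=y^t$ on $[0,1]\times(0,1]$, which is not symmetric ($y^t\ne t^y$), and $T=[0,1]$, $\Omega=(0,1]$ are not the same set. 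What actually saves the application is that the process the paper analyzes (covariance $(ts-1)/\log(ts)$, equivalently $(1-e^{-t-s})/(t+s)$ after reparametrization) is exactly your $Z(\omega)$, indexed by the $\Omega$-variable --- so your construction is the one really being used, but your bridge from $Z$ back to the stated $X$ is unsupported. The clean repair is simply to set up the duality on the $L^2(\Omega)$ side, as the paper's proof does, so that the process coming out of Proposition~\ref{prop} is the stated one and the convex hull and the process are paired consistently.
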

The authors found this corollary especially useful. For example,
it was used in \cite{MR2341952} and \cite{MR2386068} to change a
problem of metric entropy to a problem of small deviation
probability of a problem about a Gaussian process which is
relatively easier. The proof is given in \cite{MR2386068} for the
case $\Omega=[0,1]$, and in \cite{MR2341952} for the case
$[0,1]^d$. For the general case, it can be proved as easily.
Indeed, the only thing we need to prove is that $\P(D_\eps)$ can
be expressed as the probability of the set $\sup_{t\in T}|X(t)|<
\eps$. We outline a proof below. Let $\phi_n$ be an orthonormal
basis of $L^2(\Omega)$, then
$$X(t)=\int_\Omega K(t,s)dB(s)=\sum_{n=1}^\infty \xi_n\int_\Omega K(t,s)\phi_n(s)ds$$
where $\xi_n$ are i.i.d standard normal random variables. Thus,
\bqn \P(D_\eps)&=&\P\left\{g\in L^2(\Omega): \left|\int_\Omega
f(s)g(s)ds\right|< \eps, f\in \cF\right\}\\
&=&\P\left\{g\in L^2(\Omega): \left|\int_T\int_{\Omega}
K(t,s)g(s)ds\mu(dt)\right|< \eps, \|\mu\|_{TV}\le 1\right\}\\
&=&\P\left\{\sum_{n=1}^\infty a_n\phi_n(s): \sum_{n=1}^\infty
a_n^2<\infty, \left|\sum_{n=1}^\infty a_n\int_T\int_{\Omega}
K(t,s)\phi_n(s)ds\mu(dt)\right|< \eps, \|\mu\|_{TV}\le 1\right\}\\
&=&\P\left\{\sum_{n=1}^\infty a_n\phi_n(s): \sum_{n=1}^\infty
a_n^2<\infty,  \sup_{t\in T}\left|\sum_{n=1}^\infty
a_n\int_{\Omega} K(t,s)\phi_n(s)ds\right|< \eps\right\}\\
&=&\P\left(\sup_{t\in T}|X(t)|<\eps\right).
 \eqn

Now back to our problem of estimate
 $\log
N(\eps,\cM_\infty,\|\cdot\|_2)$ in the statement of (ii) of the
theorem, where $\|\cdot\|_2$ is the $L^2$ norm under the Lebesgue
measure on $[0,1]$, we notice that $\cM_\infty$ is the convex hull
of the functions $C(\cdot,s)$, $s\in [0,\infty)$, on $[0,1]$ with
$C(t,s)=e^{-ts}$. However, $[0,\infty)$ is not bounded. In order
to use Corollary~\ref{cor}, we need to make a change of variables.
Notice that by letting $y=e^{-s}$, we can view $\cM_\infty$ as
convex hull of $K(\cdot, y)$, $y\in (0,1]$, where $K(t,y)=y^t$.
Clearly, $K(t,\cdot)$ are square-integrable functions on the
bounded set $(0,1]$. Now, for this $K$, the corresponding $X(t)$
is a Gaussian process on $[0,1]$ with covariance
$$
\E X(t)X(s)=\frac{ts-1}{\log(ts)}, \quad s,t\in (0,1], \quad (s,t)\ne (1,1),
$$
and $\E X(1)^2=1$. Thus, the problem becomes how to estimate
$$\P\left(\sup_{t\in (0,1]}|X(t)|<\eps\right),\ \mbox{or
equivalently} \ \P\left(\sup_{t\ge 0}|Y(t)|<\eps\right),$$ where
$Y(t)=X(e^{-t})$, which has covariance structure
\bq
\E Y(t)Y(s)=\frac{1-e^{-t-s}}{t+s}, \quad s, t \ge 0. \label{3.2}
\eq
We now turn to the lower estimates of this probability.

\section{Lower Bound Estimate} Let $Y(t)$, $t \ge 0$ be the centered Gaussian process defined in (\ref{3.2}).
Our goal in this section is to prove that
$$
\log \PP(\sup_{t\ge 0}|Y(t)|<\eps)\le -C |\log \eps|^3,
$$
for some constant $C>0$.

Note that for any sequence of positive numbers
$\{\delta_i\}_{i=1}^n$,
\begin{eqnarray}
\PP\left(\sup_{t\ge 0}|Y(t)|<\eps\right)
&\le & \PP(\max_{1 \le i \le n} |Y( \delta_i)|<\eps) \nonumber\\
&=& (2\pi)^{-n/2} (\det \Sigma)^{-1/2} \int_{ \max_{1 \le i \le n}
|y_i| \le \eps}
\exp \left( -\langle y, \Sigma^{-1} y \rangle \right) dy_1\cdots dy_n \nonumber\\
&\le& (2\pi)^{-n/2} (\det \Sigma)^{-1/2} (2\eps)^n \nonumber \\
&\le & \eps^n (\det \Sigma)^{-1/2}.\label{determinant}
\end{eqnarray}
where the covariance matrix
$$
\Sigma= \left( \EE Y(\delta_i)Y(\delta_j) \right)_{1 \le i,j \le
n} =\left( { 1-e^{-\delta_i-\delta_j} \over {\delta_i+\delta_j}
}\right)_{1 \le i,j \le n}.
$$

To find a lower bound for $\det(\Sigma)$, we need the following
lemma:
\begin{lemma}\label{lemma}
If $0<b_{ij}<a_{ij}$ for all $1\le i,j\le n$ then
$$\det(a_{ij}-b_{ij})\ge \det(a_{ij})-\sum_{k=1}^n\max_{1\le l\le n}\frac{b_{kl}}{a_{kl}}\cdot {\rm
per}(a_{ij}).$$
where ${\rm per}(a_{ij})$ is the permanent of the matrix $(a_{ij})$.
\end{lemma}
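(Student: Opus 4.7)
The plan is to compare the two determinants term-by-term through Leibniz's formula and then reduce the problem to a clean scalar inequality about products. Setting $c_{ij} := b_{ij}/a_{ij} \in (0,1)$, I would factor each Leibniz term as $\prod_i (a_{i\sigma(i)} - b_{i\sigma(i)}) = \prod_i a_{i\sigma(i)} \cdot \prod_i (1 - c_{i\sigma(i)})$, which yields the identity
\[
\det(a_{ij}) - \det(a_{ij} - b_{ij}) \;=\; \sum_\sigma \sgn(\sigma) \Bigl(\prod_i a_{i\sigma(i)}\Bigr) \Bigl[\, 1 - \prod_i (1 - c_{i\sigma(i)}) \,\Bigr].
\]

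Next I would invoke the elementary scalar bound $0 \le 1 - \prod_{i=1}^n (1-c_i) \le \sum_{i=1}^n c_i$ for $c_i \in [0,1]$, which follows by a one-line induction on $n$. Applied inside the signed sum above, together with the triangle inequality (discarding the factor $\sgn(\sigma)$), this gives
\[
\det(a_{ij}) - \det(a_{ij} - b_{ij}) \;\le\; \sum_\sigma \Bigl(\prod_i a_{i\sigma(i)}\Bigr) \sum_k c_{k\sigma(k)} \;=\; \sum_{k=1}^n \sum_\sigma c_{k\sigma(k)} \prod_j a_{j\sigma(j)}.
\]
For each fixed $k$ I would then bound $c_{k\sigma(k)} \le \max_{1 \le l \le n} c_{kl}$ uniformly in $\sigma$, pull the maximum outside the permutation sum, and recognize the remaining $\sum_\sigma \prod_j a_{j\sigma(j)}$ as $\operatorname{per}(a_{ij})$ by definition. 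Rearranging produces exactly the claimed inequality.

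The one place where care is needed is the triangle-inequality step that drops $\sgn(\sigma)$: this is precisely where the signed cancellation that would normally restore a determinant is irretrievably lost, and the permanent necessarily appears in its place. Since the conclusion of the lemma already features $\operatorname{per}(a_{ij})$ on the right, this loss is exactly what the statement permits, and I do not anticipate any deeper obstacle; the rest of the argument is bookkeeping.
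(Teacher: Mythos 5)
Your proposal is correct and is essentially the paper's own argument in slightly different packaging: the paper's telescoping decomposition $\prod_i c_{i\sigma(i)}-\prod_i a_{i\sigma(i)}=\sum_k c_{1\sigma(1)}\cdots c_{k-1,\sigma(k-1)}(c_{k\sigma(k)}-a_{k\sigma(k)})a_{k+1,\sigma(k+1)}\cdots a_{n\sigma(n)}$ is precisely your scalar bound $1-\prod_i(1-c_i)\le\sum_i c_i$ after factoring out $\prod_i a_{i\sigma(i)}$. Both arguments then discard the sign $\sgn(\sigma)$, bound each ratio by $\max_{1\le l\le n} b_{kl}/a_{kl}$, and recognize the resulting unsigned permutation sum as $\operatorname{per}(a_{ij})$.
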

\begin{proof} For notational simplicity, we denote
$c_{ij}=a_{ij}-b_{ij}$, then \bqn
&&\det(a_{ij}-b_{ij})-\det(a_{ij})\\&=&\sum_{\sigma}(-1)^\sigma
c_{1,\sigma(1)}c_{2,\sigma(2)}c_{n,\sigma(n)}-\sum_{\sigma}(-1)^\sigma
a_{1,\sigma(1)}a_{2,\sigma(2)}\cdots a_{n,\sigma(n)}\\
&=&\sum_{\sigma}(-1)^\sigma \sum_{k=1}^n [c_{1,\sigma(1)}\cdots
c_{k-1,\sigma(k-1)}](c_{k,\sigma(k)}-a_{k,\sigma(k)})[
a_{k+1,\sigma(k+1)}\cdots a_{n,\sigma(n)}]\\
&\ge& -\sum_{\sigma} \sum_{k=1}^n [a_{1,\sigma(1)}\cdots
a_{k-1,\sigma(k-1)}](b_{k,\sigma(k)})[ a_{k+1,\sigma(k+1)}\cdots
a_{n,\sigma(n)}]\\
&\ge&  -\sum_{k=1}^n \max_{1\le l\le n}\frac{b_{kl}}{a_{kl}}
\sum_{\sigma}  [a_{1,\sigma(1)}\cdots
a_{k-1,\sigma(k-1)}](a_{k,\sigma(k)})[ a_{k+1,\sigma(k+1)}\cdots
a_{n,\sigma(n)}]\\
&=& - \sum_{k=1}^n\max_{1\le l\le n}\frac{b_{kl}}{a_{kl}}\cdot {\rm
per}(a_{ij}).\eqn
\end{proof}
In order to use Lemma~\ref{lemma} to estimate $\det(\Sigma)$, we set
$$
a_{ij}=\frac{1}{\delta_i+\delta_j}, \quad
{\rm and} \quad  b_{ij}=e^{-\delta_i-\delta_j}a_{ij}
$$
for a specific sequence
$\{\delta_i\}_{i=1}^n$ defined by
$$\delta_{mp+q}=4^{p+m}(m+q), \quad 0 \le p<m, 1 \le q \le m
$$
for $n=m^2$.

Clearly, we have
\bq
0<b_{kl}/a_{kl}\le e^{-2m4^m}, \quad 1\le k,l\le n=m^2. \label{4.2}
\eq
It remains to estimate $\det(a_{ij})$ and ${\rm
per}(a_{ij})$, which is given in the following lemma.

\begin{lemma}\label{lemma2}
For the matrix $(a_{ij})$ defined above, we have ${\rm
per}(a_{ij})\le 1$, and $\det(a_{ij})\ge (240e)^{-2m^3}$.
\end{lemma}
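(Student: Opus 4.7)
The plan is to invoke the classical Cauchy determinant formula for the matrix $a_{ij}=1/(\delta_i+\delta_j)$ and then estimate the resulting product block-by-block, exploiting the fact that the design $\delta_{mp+q}=4^{p+m}(m+q)$ makes cross-block ratios $\delta_i/\delta_j$ geometrically small while same-block ratios are all of order $1/m$.

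I would first dispatch the easier permanent bound. By AM-GM, $\delta_i+\delta_{\sigma(i)}\ge 2\sqrt{\delta_i\delta_{\sigma(i)}}$, so each summand of $\mathrm{per}(a_{ij})=\sum_\sigma\prod_i 1/(\delta_i+\delta_{\sigma(i)})$ is bounded by $(2^n\prod_i\delta_i)^{-1}$ independently of $\sigma$, giving $\mathrm{per}(a_{ij})\le n!/(2^n\prod_i\delta_i)$. An explicit computation shows $\prod_i\delta_i=4^{(3m^3-m^2)/2}\bigl[(2m)!/m!\bigr]^m$, which together with $n=m^2$ and a crude Stirling estimate easily yields $\mathrm{per}(a_{ij})\le 1$.

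For the determinant I would use Cauchy's formula
\[
\det(a_{ij})=\frac{1}{2^n\prod_i\delta_i}\prod_{i<j}\left(\frac{\delta_j-\delta_i}{\delta_j+\delta_i}\right)^2,
\]
and split the product over pairs $i<j$ into cross-block and same-block contributions. For cross-block pairs (indices in blocks $p<p'$), $\delta_i/\delta_j\le 2\cdot 4^{-(p'-p)}$, so applying the inequality $(1-\alpha)/(1+\alpha)\ge e^{-3\alpha}$ on $[0,1/2]$ and summing the resulting geometric series in $p'-p$ (with $m^2$ pairs per block-pair) produces a lower bound of the form $e^{-Cm^3}$. For same-block pairs the scale $4^{p+m}$ cancels, so the factor reduces to the dimensionless Vandermonde-type quotient
\[
\left(\prod_{1\le i<j\le m}\frac{j-i}{2m+i+j}\right)^{2m},
\]
whose numerator is $\bigl(\prod_{k=1}^{m-1}k!\bigr)^{2m}$ while the denominator has each factor of order $m$; Stirling again yields a lower bound of the form $e^{-Cm^3}$. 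Combining these with the prefactor $1/(2^n\prod_i\delta_i)\ge e^{-Cm^3}$ from the permanent calculation produces the claimed $(240e)^{-2m^3}$ bound after collecting constants.

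The main obstacle will be calibrating these three sources of exponential-in-$m^3$ decay tightly enough that the aggregated constant inside the exponent is at most $2\log(240e)$. In particular the same-block Vandermonde factor requires a careful Stirling analysis of both $\prod_{k=1}^{m-1}k!$ and $\prod_{i<j}(2m+i+j)$, since the latter's factors are only polynomially large in $m$ rather than exponentially separated, and the cross-block estimate has to use the precise $4^{p+m}$-scaling of the $\delta_i$ (not just monotonicity) in order to keep its constant controllable.
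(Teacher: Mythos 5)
Your proposal is correct and follows essentially the same route as the paper: Cauchy's determinant identity, a block decomposition of the pairs $i<j$ exploiting the geometric separation $4^{p+m}$ between blocks, and a crude bound on the permanent. The only cosmetic differences are that you bound the permanent via AM--GM rather than via $\max_{i,j}a_{ij}\le(2m4^m)^{-1}$, and you treat all cross-block pairs uniformly with $(1-\alpha)/(1+\alpha)\ge e^{-3\alpha}$ where the paper splits them into adjacent ($r-p=1$) and non-adjacent ($r-p\ge 2$) blocks; both variants yield a bound of the required form $C^{-2m^3}$, which is all the application needs.
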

\begin{proof} It is easy to see
$${\rm per}(a_{ij})\le n!(\max_{i,j}a_{ij})^n\le \frac{(m^2)!}{(2m4^m)^{m^2}}\le 1$$
since $a_{ij}\le(2m4^m)^{-1}$ for $1\le i,j \le n=m^2$.

To estimate $\det(a_{ij})$, we use the Cauchy's determinant
identity, see \cite{MR1701596},
$$\det(a_{ij})=\det\left(\frac1{\delta_i+\delta_j}\right)=\frac{\prod_{1\le i<j\le n}(\delta_j-\delta_i)^2}{\prod_{1\le i,j\le
n}(\delta_j+\delta_i)}
=\frac{1}{2^n\prod_{i=1}^n\delta_i}\cdot\prod_{1\le i<j\le
n}\left(\frac{\delta_j-\delta_i}{\delta_j+\delta_i}\right)^2.$$

For $1\le i<j\le n=m^2$, write $i=mp+q$ and $j=mr+s$ with $1 \le q, s \le m$.
Denote
\bqn
&&A=\{(i,j): i=mp+q, j=mp+s, 0\le p\le m-1, 1\le q<s\le m\},\\
&&B=\{(i,j): i=mp+q, j=m(p+1)+s, 0\le p\le m-2, 1\le q, s\le
m\},\\
&&C=\{(i,j): i=mp+q, j=mr+s, 0\le p\le m-3, p+2\le r\le m-1, 1\le
q,s\le m\}.\eqn
Then $A$, $B$ and $C$ form a partition of $1\le i<j\le n=m^2$.

Next we estimate each part separately. First, for $p=r$,
$$\frac{\delta_j-\delta_i}{\delta_j+\delta_i}
=\frac{s-q}{2m+s+q}>\frac{s-q}{4m}.$$ Thus
\bqn
\prod_{(i,j)\in A}\left(\frac{\delta_j-\delta_i}{\delta_j+\delta_i}\right)^2
&\ge& \prod_{p=0}^{m-1}\prod_{1\le q<s\le m} \left(\frac{s-q}{4m}\right)^2
= \prod_{k=1}^{m-1} \prod_{q=1}^{m-k} \left(\frac{k}{4m}\right)^{2m} \\
%&=& \left((4m)^{-m(m-1)/2} \prod_{j=1}^{m-1} j! \right)^{2m} \\
&\ge & \prod_{k=1}^{m-1}  \left(\frac{k}{4m}\right)^{2m^2}
= \left(\frac{(m-1)!}{(4m)^{m-1}}\right)^{2m^2}\\
&\ge& (8e)^{-2m^3}.
\eqn
Second, for $r-p=1$,
$$\frac{\delta_j-\delta_i}{\delta_j+\delta_i}
=\frac{(4m+4s)-(m+q)}{(4m+4s)+(m+q)}\ge \frac{1}{5}.$$
Thus we have
$$\prod_{(i,j)\in B}\left(\frac{\delta_j-\delta_i}{\delta_j+\delta_i}\right)^2\ge \prod_{p=0}^{m-2}\prod_{1\le q, s\le m}
5^{-2}\ge 5^{-2m^3}.$$
Third,  for $r-p\ge 2$,
$$\left(\frac{\delta_j-\delta_i}{\delta_j+\delta_i}\right)
=\frac{4^r(m+s)-4^p(m+q)}{4^r(m+s)+4^p(m+q)}=1-\frac{2\cdot
4^{p}(m+q)}{4^r(m+s)+4^p(m+q)}>1-\frac{1}{4^{r-p-1}}.$$
Thus we have
\bqn
\prod_{(i,j)\in
C}\left(\frac{\delta_j-\delta_i}{\delta_j+\delta_i}\right)^2&\ge&
\prod_{p=0}^{m-3}\prod_{r=p+2}^{m-1}\prod_{1\le q, s\le m}
\left(1-\frac{1}{4^{r-p-1}}\right)^2
\ge \prod_{k=1}^{m-2}\left(1-4^{-k}\right)^{2m^3}\\
&=&\exp\left(-2m^3 \sum_{k=1}^{m-2}\sum_{l=1}^\infty
\frac{4^{-kl}}{l}\right)\\
&\ge& \exp\left(-2m^3 \sum_{l=1}^\infty
\frac{1}{(4^{l}-1)l}\right)\ge \exp\left(-2m^3 \sum_{l=1}^\infty \frac{1}{3^ll}\right)\\
&=&\left(2/3\right)^{2m^3}.
\eqn
Therefore, we have \bqn
\prod_{1\le i<j\le
n}\left(\frac{\delta_j-\delta_i}{\delta_j+\delta_i}\right)^2&=&\prod_{(i,j)\in
A} \cdot \prod_{(i,j)\in B}\cdot\prod_{(i,j)\in
C}\left(\frac{\delta_j-\delta_i}{\delta_j+\delta_i}\right)^2
\ge (60e)^{-2m^3}.\eqn
On the other hand, it is not difficult to see that
\bqn
2^n\prod_{i=1}^n\delta_i&=&2^{m^2}\prod_{q=1}^m\prod_{p=0}^{m-1}4^{p+m}(m+q)
<2^{m^2}\cdot
4^{m^2(m-1)/2+m^3}(2m)^{m^2}\\
&=&4^{3m^3/2+m^2/2+m^2\log_4 m}<4^{2m^3}\eqn
for $m>1$. Therefore,
$$\det(a_{ij})
=\left(2^n\prod_{i=1}^n\delta_i\right)^{-1} \cdot\prod_{1\le i<j\le
n}\left(\frac{\delta_j-\delta_i}{\delta_j+\delta_i}\right)^2\ge
(240e)^{-2m^3}.$$
\end{proof}

Now combining the two lemmas above, and using the estimate in
(\ref{4.2}),
%that $b_{kl}/a_{kl}\le e^{-2m4^m}$,
we obtain
$$\det(\Sigma)\ge (240e)^{-2m^3}-m^2\cdot e^{-2m4^m}\ge e^{-16m^3}$$
provided that $m$ is large enough. Plugging into
(\ref{determinant}), we have \bqn \PP\left(\sup_{t\ge
0}|Y(t)|<\eps\right) &\le& e^{8m^3} \eps^{m^2}.\eqn
Minimizing the right-hand side by choosing $m\approx |\log \eps|/12,$
we obtain \bqn \PP\left(\sup_{t\ge 0}|Y(t)|<\eps\right) &\lesssim&
\exp\left(-(432)^{-1} |\log \eps|^3\right).\eqn Statement (ii)
of Theorem~\ref{theorem} follows by applying
Corollary~\ref{cor}. At the same time, we also finished the proof of Theorem 1.2.\\

\bibliographystyle{ims}
\bibliography{lap}

\begin{thebibliography}{16}
\expandafter\ifx\csname natexlab\endcsname\relax\def\natexlab#1{#1}\fi
\expandafter\ifx\csname url\endcsname\relax
  \def\url#1{\texttt{#1}}\fi
\expandafter\ifx\csname urlprefix\endcsname\relax\def\urlprefix{URL }\fi

\bibitem[{Alzer and Berg(2002)}]{MR1922200}
\textsc{Alzer, H.} and \textsc{Berg, C.} (2002).
\newblock Some classes of completely monotonic functions.
\newblock \textit{Ann. Acad. Sci. Fenn. Math.} \textbf{27} 445--460.

\bibitem[{Artstein et~al.(2004)Artstein, Milman, Szarek and
  Tomczak-Jaegermann}]{MR2105957}
\textsc{Artstein, S.}, \textsc{Milman, V.}, \textsc{Szarek, S.} and
  \textsc{Tomczak-Jaegermann, N.} (2004).
\newblock On convexified packing and entropy duality.
\newblock \textit{Geom. Funct. Anal.} \textbf{14} 1134--1141.

\bibitem[{Aurzada et~al.(2008)Aurzada, Ibragimov, Lifshits and van
  Zanten~J.H.}]{Aurzada}
\textsc{Aurzada, F.}, \textsc{Ibragimov, I.}, \textsc{Lifshits, M.} and
  \textsc{van Zanten~J.H.} (2008).
\newblock Approximation, metric entropy and small ball estimates for {G}aussian
  measures.
\newblock \textit{preprint} .

\bibitem[{Blei et~al.(2007)Blei, Gao and Li}]{MR2341952}
\textsc{Blei, R.}, \textsc{Gao, F.} and \textsc{Li, W.~V.} (2007).
\newblock Metric entropy of high dimensional distributions.
\newblock \textit{Proc. Amer. Math. Soc.} \textbf{135} 4009--4018.

\bibitem[{Bourgain et~al.(1989)Bourgain, Pajor, Szarek and
  Tomczak-Jaegermann}]{MR1008716}
\textsc{Bourgain, J.}, \textsc{Pajor, A.}, \textsc{Szarek, S.~J.} and
  \textsc{Tomczak-Jaegermann, N.} (1989).
\newblock On the duality problem for entropy numbers of operators.
\newblock In \textit{Geometric aspects of functional analysis (1987--88)}, vol.
  1376 of \textit{Lecture Notes in Math.} Springer, Berlin, 50--63.

\bibitem[{Feller(1971)}]{MR0270403}
\textsc{Feller, W.} (1971).
\newblock \textit{An introduction to probability theory and its applications.
  {V}ol. {II}.}
\newblock Second edition, John Wiley \& Sons Inc., New York.

\bibitem[{Gao(2004)}]{MR2069008}
\textsc{Gao, F.} (2004).
\newblock Entropy of absolute convex hulls in {H}ilbert spaces.
\newblock \textit{Bull. London Math. Soc.} \textbf{36} 460--468.

\bibitem[{Gao(2008)}]{MR2386068}
\textsc{Gao, F.} (2008).
\newblock Entropy estimate for {$k$}-monotone functions via small ball
  probability of integrated {B}rownian motion.
\newblock \textit{Electron. Commun. Probab.} \textbf{13} 121--130.

\bibitem[{Gao and Wellner(2009)}]{GW_Science}
\textsc{Gao, F.} and \textsc{Wellner, J.~A.} (2009).
\newblock On the rate of convergence of the maximum likelihood estimator of a
  $k$-monotone density.
\newblock \textit{Science in China, Series A: Mathematics} \textbf{52}
  1525--1538.

\bibitem[{Jewell(1982)}]{MR653523}
\textsc{Jewell, N.~P.} (1982).
\newblock Mixtures of exponential distributions.
\newblock \textit{Ann. Statist.} \textbf{10} 479--484.

\bibitem[{Krattenthaler(1999)}]{MR1701596}
\textsc{Krattenthaler, C.} (1999).
\newblock Advanced determinant calculus.
\newblock \textit{S\'em. Lothar. Combin.} \textbf{42} Art. B42q, 67 pp.
  (electronic).
\newblock The Andrews Festschrift (Maratea, 1998).

\bibitem[{Kuelbs and Li(1993)}]{MR1237989}
\textsc{Kuelbs, J.} and \textsc{Li, W.~V.} (1993).
\newblock Metric entropy and the small ball problem for {G}aussian measures.
\newblock \textit{J. Funct. Anal.} \textbf{116} 133--157.

\bibitem[{Li and Linde(1999)}]{MR1733160}
\textsc{Li, W.~V.} and \textsc{Linde, W.} (1999).
\newblock Approximation, metric entropy and small ball estimates for {G}aussian
  measures.
\newblock \textit{Ann. Probab.} \textbf{27} 1556--1578.

\bibitem[{Tomczak-Jaegermann(1987)}]{MR910364}
\textsc{Tomczak-Jaegermann, N.} (1987).
\newblock Dualit\'e des nombres d'entropie pour des op\'erateurs \`a valeurs
  dans un espace de {H}ilbert.
\newblock \textit{C. R. Acad. Sci. Paris S\'er. I Math.} \textbf{305} 299--301.

\bibitem[{Widder(1941)}]{MR0005923}
\textsc{Widder, D.~V.} (1941).
\newblock \textit{The {L}aplace {T}ransform}.
\newblock Princeton Mathematical Series, v. 6, Princeton University Press,
  Princeton, N. J.

\bibitem[{Williamson(1956)}]{MR0077581}
\textsc{Williamson, R.~E.} (1956).
\newblock Multiply monotone functions and their {L}aplace transforms.
\newblock \textit{Duke Math. J.} \textbf{23} 189--207.

\end{thebibliography}

\end{document}